\title{On log K-stability for asymptotically log Fano varieties}
\author{Kento Fujita} 
\date{\today}
\subjclass[2010]{Primary 14J45; Secondary 14L24}
\keywords{Fano varieties, K-stability, K\"ahler-Einstein metrics}
\address{Department of Mathematics, Faculty of Science, 
Kyoto University, Kyoto 606-8502, Japan}
\email{fujita@math.kyoto-u.ac.jp}
\newcommand{\pr}{\mathbb{P}}
\newcommand{\Z}{\mathbb{Z}}
\newcommand{\Q}{\mathbb{Q}}
\newcommand{\R}{\mathbb{R}}
\newcommand{\C}{\mathbb{C}}
\newcommand{\A}{\mathbb{A}}
\newcommand{\G}{\mathbb{G}}
\newcommand{\Image}{\operatorname{Image}}
\newcommand{\DIV}{\operatorname{div}}
\newcommand{\id}{\operatorname{id}}
\newcommand{\DF}{\operatorname{DF}}
\newcommand{\vol}{\operatorname{vol}}
\newcommand{\sI}{\mathcal{I}}
\newcommand{\sO}{\mathcal{O}}
\newcommand{\sX}{\mathcal{X}}
\newcommand{\sD}{\mathcal{D}}
\newcommand{\sL}{\mathcal{L}}
\newcommand{\sM}{\mathcal{M}}
\newtheorem{thm}{Theorem}[section]
\newtheorem{lemma}[thm]{Lemma}
\newtheorem{corollary}[thm]{Corollary}
\theoremstyle{definition}
\newtheorem{definition}[thm]{Definition}
\newtheorem{remark}[thm]{Remark}
\newtheorem{assumption}[thm]{Assumption}
\newtheorem{example}[thm]{Example}
\newtheorem*{ack}{Acknowledgments}
\begin{document}

\maketitle 

\begin{abstract}
The notion of asymptotically log Fano varieties was given by Cheltsov and Rubinstein. 
We show that, 
if an asymptotically log Fano variety $(X, D)$ satisfies that $D$ is irreducible
and $-K_X-D$ is big, then $X$ does not admit K\"ahler-Einstein edge metrics 
with angle $2\pi\beta$ along $D$ 
for any sufficiently small positive rational number $\beta$. 
This gives an affirmative answer to a conjecture of Cheltsov and Rubinstein. 
\end{abstract}

\setcounter{tocdepth}{1}
\tableofcontents

\section{Introduction}\label{intro_section}

The purpose of this article is to give a simple necessary criterion for log K-stability of 
$((X, D), -K_X-(1-\beta)D)$ with cone angle $2\pi\beta$ in the sense of \cite{OS}, 
where $X$ is projective log terminal and 
$D$ is a reduced Weil divisor with $-K_X-(1-\beta)D$ ample. 
The motivation comes from a recent preprint of Cheltsov and Rubinstein \cite{CR}, 
who treated the case that the dimension of $X$ is equal to two. 
In this article, we show the following result. 

\begin{thm}[{=Theorem \ref{thmthm}}]\label{mainthm}
Let $X$ be a normal projective variety which is log terminal, 
$D$ be a nonzero reduced Weil divisor on $X$ which is $\Q$-Cartier, 
and $0\leq\beta\leq 1$ be a rational number. 
Assume that the pair $(X, (1-\beta)D)$ is dlt, $-K_X-(1-\beta)D$ is ample, 
and $((X, D), -K_X-(1-\beta)D)$ is log K-stable $($resp.\ log K-semistable$)$ with 
cone angle $2\pi\beta$. Then we have $\eta_\beta(D)>0$ $($resp.\ $\geq 0)$, 
where 
\[
\eta_\beta(D):=\beta\cdot\vol_X(-K_X-(1-\beta)D)-\int_0^\infty
\vol_X(-K_X-(1-\beta+x)D)dx.
\]
Note that $\vol_X$ is the volume function $($see \cite{L}$)$. 
\end{thm}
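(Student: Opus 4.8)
The strategy is to construct an explicit test configuration (in the log sense of \cite{OS}) whose log Donaldson–Futaki invariant is a negative multiple of $\eta_\beta(D)$; then log K-stability (resp.\ semistability) forces $\eta_\beta(D)>0$ (resp.\ $\geq 0$). The natural candidate is the \emph{deformation to the normal cone} of $D\subset X$: let $\sX\to\A^1$ be the blow-up of $X\times\A^1$ along $D\times\{0\}$, with exceptional divisor, and let $\sD\subset\sX$ be the strict transform of $D\times\A^1$. This carries the obvious $\G_m$-action covering the standard action on $\A^1$, so it gives a log test configuration for $((X,D),-K_X-(1-\beta)D)$ once we choose a compatible polarization. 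Since $-K_X-(1-\beta)D$ is ample but $D$ need not be $\Q$-Cartier in a convenient way, the polarization on $\sX$ should be taken of the form (pullback of $-K_X-(1-\beta)D$) twisted by a small multiple $-c\cdot E$ of the exceptional divisor, with $c$ a rational parameter in a range making it relatively ample; equivalently one works on the family $\mathrm{Proj}$ of the Rees algebra $\bigoplus_m \sum_{j} H^0(X,\sO_X(m(-K_X-(1-\beta)D)-jD))t^{-j}$, whose associated graded pieces are governed exactly by the volumes $\vol_X(-K_X-(1-\beta+x)D)$.

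First I would set up the Rees/filtration formalism: for the filtration of $R_m:=H^0(X, m(-K_X-(1-\beta)D))$ by order of vanishing along $D$, the leading-order asymptotics of $\dim R_m$ and of the weight of the $\G_m$-action on $H^0(\sX_0,\cdot)$ are computed by integrating $\vol_X(-K_X-(1-\beta)D - xD) = \vol_X(-K_X-(1-\beta+x)D)$ against $dx$ and against $x\,dx$ respectively. Second I would compute the two ingredients of the log Donaldson–Futaki invariant: the "CM/Futaki" part, coming from $K_{\sX/\A^1}$ and the polarization, and the "log/boundary" part $(1-\beta)$ times the analogous quantity for the divisor $\sD$ — here the key point is that $\sD$ meets the central fibre in the strict transform of $D$, which is isomorphic to $D$ itself, so its contribution is essentially $\vol_X(-K_X-(1-\beta)D)$ restricted appropriately. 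Third, assembling these, the $c\to 0^+$ (or the optimally chosen $c$) limit of the log Donaldson–Futaki invariant should be, up to a positive constant depending on $\dim X$ and normalization, exactly $-\eta_\beta(D)$; the term $\beta\cdot\vol_X(-K_X-(1-\beta)D)$ arises from the $(1-\beta)$-weighted boundary term combined with the $K_{\sX/\A^1}$ term, and the integral $\int_0^\infty \vol_X(-K_X-(1-\beta+x)D)\,dx$ is precisely the $\G_m$-weight asymptotics described above. Finally I would check that the test configuration is nontrivial (so that log K-stability genuinely applies) — this is automatic since $D\neq 0$ — and that all intermediate pairs appearing are sufficiently controlled (dlt-ness of $(X,(1-\beta)D)$ ensures log canonical thresholds and log terminal singularities behave, so the volumes are finite and the integral converges; note $-K_X-D$ need not be pseudoeffective, in which case the integral is over a bounded interval).

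The main obstacle I expect is twofold. The first is making the volume asymptotics rigorous when $D$ is merely $\Q$-Cartier and the ring $\bigoplus_m H^0(X, m(-K_X-(1-\beta)D))$ together with its $D$-adic filtration need not be finitely generated: one must invoke continuity and log-concavity of the volume function (Lazarsfeld–Mustață type results, as in \cite{L}) to justify that $\lim_{m\to\infty} m^{-n-1}\sum_{j\geq 0}\dim\bigl(\mathrm{gr}^j R_m\bigr)\cdot j$ equals $\int_0^\infty \vol_X(-K_X-(1-\beta+x)D)\,dx$, rather than appealing to a naive Riemann–Roch on a fixed model. The second is bookkeeping the normalization constants in the log Donaldson–Futaki invariant of \cite{OS} — in particular tracking the factor of $\beta$ and ensuring the boundary term $-\int(1-\beta)\cdot(\text{something})$ combines correctly with the canonical term to produce the stated $\eta_\beta(D)$ with the right sign — so that log K-semistability yields $\eta_\beta(D)\geq 0$ and strict log K-stability yields strict positivity. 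Everything else (the $\G_m$-equivariant geometry of deformation to the normal cone, relative ampleness of the twisted polarization for small $c$) is standard.
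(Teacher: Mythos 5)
There are two genuine gaps. First, the logic of your opening step is inverted: you assert (twice) that the log Donaldson--Futaki invariant of your configuration is a \emph{negative} multiple of $\eta_\beta(D)$, and then conclude that log K-stability forces $\eta_\beta(D)>0$. If $\DF=-c\,\eta_\beta(D)$ with $c>0$, log K-(semi)stability ($\DF>0$, resp.\ $\geq 0$, for all admissible configurations) would force $\eta_\beta(D)<0$ (resp.\ $\leq 0$), the opposite of the theorem. What is actually true, and what the paper computes, is that the degeneration attached to the filtration by order of vanishing along $D$ satisfies $\DF_\beta((\sX,\sD),\sL_\beta)=\tfrac{r^n(L_\beta^{\cdot n})}{(n!)^2}\,\eta_\beta(D)$, a \emph{positive} multiple, so the implication goes through. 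Relatedly, your ``$c\to 0^+$ (or optimally chosen $c$)'' deformation to the normal cone cannot see $\eta_\beta(D)$ at all: the twisted polarization $\Pi^*p_1^*L-cE$ is only relatively (semi)ample for $c$ in a Seshadri-type range, the resulting Ross--Thomas-type obstruction only involves $-K_X-(1-\beta+x)D$ for $x\in[0,c]$ and tends to $0$ with $c$, whereas $\eta_\beta(D)$ integrates the volume over the whole interval $[0,\tau_\beta(D)]$ up to the pseudoeffective threshold, typically far beyond the ample/nef range. One must use the degeneration encoding the full filtration $j\in[0,r\tau_\beta(D)]$ (Odaka's generalization of Ross--Thomas), which you gesture at via the Rees algebra but then set aside.

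Second, your treatment of the asymptotics is both unnecessary and insufficient. Unnecessary, because under the hypotheses the bigraded algebra $\bigoplus_{k,j}H^0(X,\sO_X(kL_\beta-jD))$ \emph{is} finitely generated by \cite[Corollary 1.1.9]{BCHM}; this is exactly what lets one define an honest flag ideal $\sI=I_{r\tau_\beta(D)}+\cdots+I_1t^{r\tau_\beta(D)-1}+(t^{r\tau_\beta(D)})$ and hence a log semi test configuration in the Odaka--Sun blow-up formalism in which log K-stability is defined here; a non-finitely-generated filtration does not directly produce such an object, so your ``continuity and log-concavity of the volume'' route would additionally require an approximation/truncation argument that you do not supply. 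Insufficient, because the Donaldson--Futaki invariant needs not only the leading weight coefficient $b_0$ (which is indeed an integral of volumes) but also the subleading coefficient $b_1$, i.e.\ the $k^n$-term of $v(k)=\sum_j h^0(X,\sO_X(kL_\beta-jD))$, which involves canonical-divisor intersection numbers and is not determined by volume asymptotics alone. The paper obtains it via the geography of models (\cite[Theorem 4.2]{KKL}): on each interval $[\tau_{i-1},\tau_i]$ the relevant sections are computed on a model $X_i$ where the divisor is semiample, so asymptotic Riemann--Roch gives $v_1$ in terms of $(K_{X_i}+D_i)$-intersections, and integration by parts then assembles the DF invariant into the stated positive multiple of $\eta_\beta(D)$. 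Without this device (or an equivalent one) your plan cannot pin down the DF invariant, let alone its sign.
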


Theorem \ref{mainthm} immediately gives the following corollary. 

\begin{corollary}[{see Corollary \ref{corcor}}]\label{intro_cor}
Let $X$ be a smooth projective variety and $D$ be a nonzero 
reduced simple normal crossing divisor on $X$. 
Assume that $-K_X-(1-\beta)D$ is ample for any $0<\beta\ll 1$ and 
the divisor $-K_X-D$ is big. Then $((X, D), -K_X-(1-\beta)D)$ is not log K-semistable 
with cone angle $2\pi\beta$ for any $0<\beta\ll 1$ with $\beta\in\Q$. In particular, 
$X$ does not admit K\"ahler-Einstein edge metrics with angle $2\pi\beta$ along $D$ 
for any $0<\beta\ll 1$ with $\beta\in\Q$. 
\end{corollary}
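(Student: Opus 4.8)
\emph{Proof proposal.} The plan is to derive the Corollary from Theorem \ref{mainthm} by an elementary estimate on $\eta_\beta(D)$ as $\beta\to 0^+$. Suppose, for some rational $0<\beta\ll 1$, that $((X,D),-K_X-(1-\beta)D)$ were log K-semistable with cone angle $2\pi\beta$. Since $X$ is smooth it is log terminal and $D$ is Cartier; since $D$ is reduced and simple normal crossing and $0<1-\beta<1$, the pair $(X,(1-\beta)D)$ is log smooth, hence dlt; and $-K_X-(1-\beta)D$ is ample by hypothesis. Thus Theorem \ref{mainthm} applies and yields $\eta_\beta(D)\geq 0$. It therefore suffices to show $\eta_\beta(D)<0$ for all sufficiently small rational $\beta>0$, which rules out log K-semistability, and then to invoke the standard implication (the ``easy direction'' in the framework of \cite{OS}) that the existence of a K\"ahler--Einstein edge metric with angle $2\pi\beta$ along $D$ forces $((X,D),-K_X-(1-\beta)D)$ to be log K-semistable with that cone angle.

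To estimate $\eta_\beta(D)$, substitute $t=1-\beta+x$ in the defining integral to get
\[
\eta_\beta(D)=\beta\cdot\vol_X(-K_X-(1-\beta)D)-\int_{1-\beta}^\infty\vol_X(-K_X-tD)\,dt.
\]
Since $(1-\beta)D$ is an effective $\Q$-divisor, the multiplication-by-section inclusion gives $\vol_X(-K_X-(1-\beta)D)\leq\vol_X(-K_X)=:B<\infty$. For the integral, since $1-\beta<1$ and the integrand is non-negative,
\[
\int_{1-\beta}^\infty\vol_X(-K_X-tD)\,dt\geq\int_1^\infty\vol_X(-K_X-tD)\,dt=:c.
\]
The constant $c$ is finite because $\vol_X(-K_X-tD)\leq B$ and $\vol_X(-K_X-tD)=0$ for $t\gg 0$ (as $(-K_X-tD)\cdot A^{n-1}\to-\infty$ for $A$ ample, so $-K_X-tD$ is not pseudoeffective once $t$ is large, $D$ being nonzero effective), and $c$ is strictly positive because $-K_X-D$ is big, so by openness of the big cone and continuity of $\vol_X$ one has $\vol_X(-K_X-tD)>0$ for $t$ in a right-neighborhood of $1$. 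Combining, $\eta_\beta(D)\leq\beta B-c$, which is $<0$ as soon as $0<\beta<c/B$; note $B\geq\vol_X(-K_X-D)>0$, so this is a nonempty range containing rational values.

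Putting these together proves both assertions: for rational $\beta$ with $0<\beta<\min(c/B,\varepsilon_0)$, where $\varepsilon_0>0$ bounds the range in which $-K_X-(1-\beta)D$ is ample, the pair $((X,D),-K_X-(1-\beta)D)$ is not log K-semistable with cone angle $2\pi\beta$, and hence $X$ admits no K\"ahler--Einstein edge metric with angle $2\pi\beta$ along $D$. I expect no real obstacle in this reduction: all the genuine content sits in Theorem \ref{mainthm}. The only points requiring care are (i) confirming that the metric-to-log-K-semistability implication is available in the precise $\Q$-coefficient edge-metric setting used here, and (ii) the elementary properties of the volume function invoked above --- monotonicity under subtracting an effective divisor, finiteness, continuity, and openness of the big cone --- all of which are recorded in \cite{L}.
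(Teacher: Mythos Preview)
Your argument is correct and follows essentially the same route as the paper: the paper writes $\eta_\beta(D)=\eta_+(\beta)-\eta_-$ with $\eta_-=\int_1^\infty\vol_X(-K_X-xD)\,dx>0$ (by bigness of $-K_X-D$) and $\eta_+(\beta)\to 0$, which is exactly your estimate $\eta_\beta(D)\le \beta B-c$ in slightly different packaging, and then invokes Theorem~\ref{agdg_thm} for the K\"ahler--Einstein edge consequence. Your verification that the smooth/SNC hypotheses feed into Theorem~\ref{mainthm} and your appeal to the implication ``KE edge $\Rightarrow$ log K-semistable'' (the paper's Theorem~\ref{agdg_thm}, due to \cite{Berman,CR,OS}) are both in order.
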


Corollary \ref{intro_cor} gives an affirmative answer for a conjecture of Cheltsov and 
Rubinstein for asymptotically log Fano varieties \cite{CR0} with irreducible boundaries 
in any dimension. Although the following corollary is a special case of 
Corollary \ref{intro_cor}, we state the assertion for the readers' convenience. 

\begin{corollary}[{see \cite[Conjecture 1.11 (i)]{CR0}}]\label{maincor}
Let $(X, D)$ be an asymptotically log Fano variety with $D$ irreducible, 
that is, $X$ is a smooth projective variety and $D$ is a smooth irreducible divisor on 
$X$ such that $-K_X-(1-\beta)D$ is ample for any $0<\beta\ll 1$. 
If the divisor $-K_X-D$ is big, 
then $X$ does not admit K\"ahler-Einstein edge metrics with angle 
$2\pi\beta$ along $D$ for any $0<\beta\ll 1$ with $\beta\in\Q$. 
\end{corollary}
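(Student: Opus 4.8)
Since $D$ is smooth and irreducible it is, in particular, a nonzero reduced divisor with simple normal crossing support, and $X$ is smooth projective; thus Corollary~\ref{maincor} is literally the special case of Corollary~\ref{intro_cor} in which the boundary is prime. The plan is therefore simply to quote Corollary~\ref{intro_cor}: its two hypotheses, that $-K_X-(1-\beta)D$ be ample for all $0<\beta\ll1$ and that $-K_X-D$ be big, are exactly the assumptions made here, and its conclusion already includes the non-existence of K\"ahler--Einstein edge metrics of angle $2\pi\beta$ along $D$ for every sufficiently small rational $\beta>0$. (The passage from the failure of log K-semistability of $((X,D),-K_X-(1-\beta)D)$ to the absence of a K\"ahler--Einstein edge metric is the ``metric $\Rightarrow$ stability'' implication of \cite{OS}: a K\"ahler--Einstein edge metric of angle $2\pi\beta$ would force log K-polystability, hence log K-semistability, with cone angle $2\pi\beta$.) So no new argument is needed beyond specialization.

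The non-formal content all sits behind Corollary~\ref{intro_cor}, i.e.\ behind Theorem~\ref{mainthm}, so let me indicate how I would prove the latter. Arguing by contrapositive, assume $\eta_\beta(D)\le0$; I want a nontrivial test configuration with nonpositive log Donaldson--Futaki invariant. The natural candidate is the \emph{deformation to the normal cone of $D$}. Set $L:=-K_X-(1-\beta)D$ and let $\tau:=\sup\{x>0:-K_X-(1-\beta+x)D\text{ is big}\}<\infty$. For a rational $0<c\le\tau$ take $\mathcal{X}^{(c)}$ to be (the normalization of, or an ample model of) the blow-up of $X\times\pr^1$ along the ideal of $D\times\{0\}$, with exceptional divisor $E$, polarized by $\mathcal{L}^{(c)}:=p^*L-cE$ where $p\colon\mathcal{X}^{(c)}\to X$ is the natural morphism, and with $\mathcal{D}^{(c)}$ the closure of $D\times(\pr^1\setminus\{0\})$; this is a nontrivial test configuration of $((X,D),L)$. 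One then evaluates $\mathrm{DF}_\beta(\mathcal{X}^{(c)},\mathcal{D}^{(c)};\mathcal{L}^{(c)})$ via Odaka's intersection-theoretic formula for the log Donaldson--Futaki invariant on the compactified family $\bar{\mathcal{X}}^{(c)}\to\pr^1$. Using $-(K_X+(1-\beta)D)\cdot L^{n-1}=(L^n)$ and pushing every intersection number down to $X$ and to $D$ (projection formula plus the blow-up/Segre-class formulae for the normal cone of $D$), the answer is a positive multiple of $\beta\cdot\vol_X(L)-\int_0^c\vol_X(-K_X-(1-\beta+x)D)\,dx$; letting $c\to\tau$, where $\vol_X(-K_X-(1-\beta+x)D)$ vanishes for $x\ge\tau$, turns the truncated integral into $\int_0^\infty(\cdots)\,dx$, so $\mathrm{DF}_\beta$ tends to a positive multiple of $\eta_\beta(D)$. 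Hence $\eta_\beta(D)\le0$ (resp.\ $<0$) gives a test configuration with $\mathrm{DF}_\beta\le0$ (resp.\ $<0$), contradicting log K-stability (resp.\ log K-semistability).

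I expect the main obstacle to be making $\mathcal{X}^{(c)}$ rigorous as $c\to\tau$: there $-K_X-(1-\beta+x)D$ need no longer be semiample, so $\mathcal{L}^{(c)}$ is only relatively big, and one must either first pass to a log resolution (where the dlt and log terminal hypotheses keep the discrepancies of $E$ and of the exceptional divisors under control and where $\vol_X$ appears as the top self-intersection of the positive part) or approximate by $c'<\tau$ and invoke continuity of $\mathrm{DF}_\beta$ in the polarization. The second, more clerical, obstacle is verifying that the several correction terms produced by the Segre classes of $D$ in $X$ recombine into precisely $\beta\cdot\vol_X(-K_X-(1-\beta)D)-\int_0^\infty\vol_X(-K_X-(1-\beta+x)D)\,dx$; the identity that makes this work is adjunction along $D$ together with $\vol_X(-K_X-(1-\beta+\tau)D)=0$.
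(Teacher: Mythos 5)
Your first paragraph is exactly the paper's route for this statement: Corollary \ref{maincor} is the prime-boundary special case of Corollary \ref{intro_cor}, and the passage from failure of log K-semistability to non-existence of the K\"ahler--Einstein edge metric is Theorem \ref{agdg_thm}; no further argument is needed, and the paper gives none. (The paper obtains Corollary \ref{intro_cor} from Theorem \ref{agdg_thm} together with Corollary \ref{corcor}, whose one-line content is that $\eta_\beta(D)=\eta_+(\beta)-\eta_-$ with $\eta_->0$ when $-K_X-D$ is big and $\eta_+(\beta)\to 0$ as $\beta\to 0$.)

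The sketch you append for Theorem \ref{mainthm}, however, has a genuine gap, and it is precisely the point where the paper's construction differs from yours. The plain deformation to the normal cone of $D$, polarized by $p^*L-cE$, is a (log semi) test configuration only for $c$ up to the nef/Seshadri threshold of $L$ with respect to $D$, and in that range Odaka's intersection-theoretic formula produces a Ross--Thomas type slope built from intersection numbers $\left((L-xD)^{\cdot n-1}\cdot(\cdot)\right)$ computed on $X$; these agree with $\vol_X(-K_X-(1-\beta+x)D)$ only while $-K_X-(1-\beta+x)D$ stays nef. Neither of your proposed repairs bridges the interval between the nef threshold and $\tau$: continuity in the polarization cannot push $c$ past the point where $\mathcal{L}^{(c)}$ ceases to be relatively semiample (the object simply stops being a test configuration), a log resolution does not make a big-but-non-semiample class semiample, and the intersection numbers on the relevant model are no longer computed by Segre-class formulas on $X$. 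Your parenthetical ``ample model of the blow-up'' is in fact the whole content of the paper: Fujita takes the flag ideal $\sI=I_{r\tau_\beta(D)}+I_{r\tau_\beta(D)-1}t+\cdots+(t^{r\tau_\beta(D)})$ built from the images of the evaluation maps of $H^0(X,\sO_X(L_\beta-jD))$ (finite generation via \cite[Corollary 1.1.9]{BCHM}), and then uses the geography of models (Theorem \ref{KKL_thm}) to evaluate $w(k)$ on the varying ample models $X_i$; it is exactly the passage to the models $X_i$ over the successive intervals $[\tau_{i-1},\tau_i]$ that converts intersection numbers into volumes and yields $\DF_\beta$ as a positive multiple of $\eta_\beta(D)$ over the full range $[0,\tau_\beta(D)]$. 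Without that (or an equivalent Zariski-decomposition argument), the claimed identity ``$\DF_\beta$ tends to a positive multiple of $\eta_\beta(D)$ as $c\to\tau$'' is unjustified, and in general false for the unmodified deformation to the normal cone.
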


\begin{remark}\label{r_rmk}
In \cite[Theorem 1.6]{CR} (see also \cite[Conjecture 1.5]{CR}), 
Cheltsov and Rubinstein proved Corollary \ref{maincor} 
in dimension two by using a construction of flops on the deformation to the normal cone. 
\end{remark}

The strategy for the proof of Theorem \ref{mainthm} 
is essentially same as the strategy in \cite{fjt}. 
We consider a kind of ``log-version" of divisorial stability along $D$ in the sense of 
\cite{fjt}. 
We construct a specific log semi test configuration from certain section ring (see 
Remark \ref{BCHM_rmk}) and 
calculate its log Donaldson-Futaki invariant explicitly by using the theory of 
``geography of models" (see Theorem \ref{KKL_thm}). 

\begin{ack}
The author thanks Professor Yanir Rubinstein who recommended to write down 
Section \ref{ex_section}. 
The author acknowledges support from the Simons Center for Geometry and Physics, 
Stony Brook University at which some of the research for this paper was performed.
The author is partially supported by a JSPS Fellowship for Young Scientists. 
\end{ack}

A \emph{variety} stands for a reduced, irreducible, separated and of 
finite type scheme over the complex number field $\C$. For the theory 
of minimal model program, we refer the readers to \cite{KoMo}. 
For any Weil divisor $E$ on a normal variety $X$, the 
\emph{divisorial sheaf} on $X$ is denoted by $\sO_X(E)$. More precisely, 
the section $\Gamma(U, \sO_X(E))$ on any open subscheme $U\subset X$ is given by 
the following:
\[
\{f\in \C(X)\,|\,\DIV(f)|_U+E|_U\geq 0\},
\]
where $\C(X)$ is the function field of $X$. 

For varieties $X_1$ and $X_2$, let $p_i\colon X_1\times X_2\to X_i$ $(i=1$, $2)$ be 
the projection morphisms.

\section{Log K-stability}\label{K_section}

We recall the definition of log K-stability. 

\begin{definition}[{see \cite{OS}}]\label{K_dfn}
Let $X$ be an $n$-dimensional normal projective variety, $L$ be an ample line bundle 
on $X$, and $D$ be a reduced Weil divisor on $X$. 
\begin{enumerate}
\renewcommand{\theenumi}{\arabic{enumi}}
\renewcommand{\labelenumi}{(\theenumi)}
\item\label{K_dfn1}
A coherent ideal sheaf $\sI\subset\sO_{X\times\A_t^1}$ is said to be 
a \emph{flag ideal}
if $\sI$ is of the form 
\[
\sI=I_M+I_{M-1}t^1+\cdots+I_1t^{M-1}+(t^M)\subset\sO_{X\times\A_t^1},
\]
where $I_M\subset\cdots\subset I_1\subset\sO_X$ is a sequence of 
coherent ideal sheaves of $X$. 
\item\label{K_dfn2}
Let $m\in\Z_{>0}$, and let $\sI\subset\sO_{X\times\A^1}$ be a flag ideal. 
A \emph{log semi test configuration} $((\sX, \sD), \sM)/\A^1$ \emph{of} 
$((X, D), L^{\otimes m})$ \emph{obtained by} $\sI$ is given from the following data: 
\begin{itemize}
\item
$\Pi\colon\sX\to X\times\A^1$ is the blowing up along $\sI$, 
$\sD\subset\sX$ is given by the blowing up of $D\times\A^1$ along 
$\sI|_{D\times\A^1}$,
and $E\subset\sX$ 
is the Cartier divisor defined by $\sO_\sX(-E)=\sI\cdot\sO_\sX$,
\item
$\sM$ is the line bundle on $\sX$ defined by 
$\sM:=\Pi^*p_1^*L^{\otimes m}\otimes\sO_\sX(-E)$, 
\end{itemize}
such that we require the following: 
\begin{itemize}
\item
$\sI$ is not of the form $(t^M)$, and
\item
$\sM$ is semiample  over $\A^1$.
\end{itemize}
\item\label{K_dfn3}
Assume that $((\sX, \sD), \sM)/\A^1$ is a log semi test configuration of 
$((X, D), L^{\otimes m})$
obtained by $\sI$. Then the multiplicative group 
$\G_m$ naturally acts on $(\sX, \sM)$ and $(\sD, \sM|_\sD)$. 
For $k\in\Z_{>0}$, let 
$w(k)$ be the total weight of $\G_m$-action on $H^0(\sX_0, \sM^{\otimes k}|_{\sX_0})$ 
and $\tilde{w}(k)$ be the total weight of $\G_m$-action on 
$H^0(\sD_0, \sM^{\otimes k}|_{\sD_0})$, where $\sX_0\subset\sX$ and 
$\sD_0\subset\sD$ are the scheme-theoretic fibers at $0\in\A^1$, respectively. 
It is known that, for $k\gg 0$,  $w(k)$ (resp.\ $\tilde{w}(k)$) is a polynomial function 
of degree at most $n+1$ (resp.\ $n$). For $k\gg 0$, we set 
\begin{eqnarray*}
\chi(X, L^{\otimes mk})&=&a_0k^n+a_1k^{n-1}+O(k^{n-2}),\\
\chi(D, L|_D^{\otimes mk})&=&\tilde{a}_0k^{n-1}+O(k^{n-2}),\\
w(k)&=&b_0k^{n+1}+b_1k^n+O(k^{n-1}),\\
\tilde{w}(k)&=&\tilde{b}_0k^{n}++O(k^{n-1}).
\end{eqnarray*}
For any $\beta\in[0,1]$, we set the 
\emph{log Donaldson-Futaki invariant} $\DF_\beta((\sX, \sD), \sM)$ 
\emph{with cone angle $2\pi\beta$} as
\begin{eqnarray*}
\DF_\beta((\sX, \sD), \sM):=2(b_0a_1-b_1a_0)+(1-\beta)(a_0\tilde{b}_0-b_0\tilde{a}_0).
\end{eqnarray*}
\item\label{K_dfn4}
Let $\beta\in[0,1]$. 
$((X, D),L)$ is said to be \emph{log K-stable} (resp.\ \emph{log K-semistable}) 
\emph{with cone angle $2\pi\beta$}
if 
$\DF_\beta((\sX, \sD), \sM)>0$ (resp.\ $\geq 0$) holds for any $m\in\Z_{>0}$, 
for any flag ideal $\sI$, and for any log semi test configuration 
$((\sX, \sD), \sM)/\A^1$ of $((X, D), L^{\otimes m})$ obtained by $\sI$. 
For an ample $\Q$-divisor $A$ on $X$, 
$((X, D), A)$ is said to be \emph{log K-stable} (resp.\ \emph{log K-semistable}) 
\emph{with cone angle $2\pi\beta$} if $((X, D), \sO_X(aA))$ is so for some $a\in\Z_{>0}$
with $aA$ Cartier (this definition does not depend on the choice of $a$). 
\end{enumerate}
\end{definition}

The following theorem is important. 

\begin{thm}[{see \cite{Berman, CR, OS}}]\label{agdg_thm}
Let $X$ be a smooth projective variety, $D$ be a reduced simple normal crossing 
divisor on $X$, and let $\beta\in[0, 1]\cap\Q$. 
Assume that $-K_X-(1-\beta)D$ is ample 
and $X$ admits K\"ahler-Einstein edge metrics with angle $2\pi\beta$ along $D$. 
Then $((X, D), -K_X-(1-\beta)D)$ is log K-semistable with cone angle $2\pi\beta$. 
\end{thm}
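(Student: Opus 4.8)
The plan is to follow the variational strategy of Berman, extended to the log (edge) setting as in \cite{OS}. Since log K-semistability with cone angle $2\pi\beta$ asserts exactly that $\DF_\beta((\sX,\sD),\sM)\geq 0$ for every $m\in\Z_{>0}$, every flag ideal $\sI$, and every log semi test configuration $((\sX,\sD),\sM)/\A^1$ of $((X,D),(-K_X-(1-\beta)D)^{\otimes m})$ obtained by $\sI$, it suffices to fix one such configuration and produce the inequality; after rescaling we may take $L=-K_X-(1-\beta)D$ throughout. To the fixed configuration I would first associate, via the Phong--Sturm and Ross--Witt Nystr\"om construction, a geodesic ray $(\phi_s)_{s\geq 0}$ of (possibly singular) positively curved metrics on $L$ emanating from a smooth reference metric $\phi_0$: one compactifies $\sX$ over $\pr^1$, equips $\sM$ with the metric furnished by its semiampleness over $\A^1$, and reads off the ray through the $\G_m$-action. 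The central fiber together with the weight data $w(k),\tilde w(k)$ of Definition \ref{K_dfn} is precisely what controls the asymptotics of this ray as $s\to\infty$.

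The analytic heart is the computation of the asymptotic slope of the log Ding functional $\mathrm{Ding}_\beta$ along this ray. The key input, due to Berman and its log extension, is the slope formula together with the comparison to the algebraic invariant,
\begin{equation*}
\lim_{s\to\infty}\frac{\mathrm{Ding}_\beta(\phi_s)}{s}=\mathrm{Ding}^{\mathrm{NA}}_\beta((\sX,\sD),\sM)\leq\DF_\beta((\sX,\sD),\sM),
\end{equation*}
where $\mathrm{Ding}^{\mathrm{NA}}_\beta$ is the non-Archimedean Ding invariant, expressed through intersection numbers on the compactified total space. The inequality on the right reflects that the full log Donaldson--Futaki invariant carries an extra nonnegative entropy/discrepancy term beyond the log-canonical-threshold term that the Ding functional detects; verifying it with the correct edge correction $(1-\beta)(a_0\tilde b_0-b_0\tilde a_0)$ appearing in Definition \ref{K_dfn} is where the precise bookkeeping of the boundary $D$ enters.

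The existence hypothesis is then invoked. A K\"ahler--Einstein edge metric with angle $2\pi\beta$ along $D$ is exactly a critical point of $\mathrm{Ding}_\beta$, and by Berndtsson convexity of $\mathrm{Ding}_\beta$ along geodesics, valid in the conical setting through the regularity theory of Jeffres--Mazzeo--Rubinstein, such a critical point is a global minimizer. Hence $\mathrm{Ding}_\beta$ is bounded below, so its asymptotic slope along the ray $(\phi_s)$ is nonnegative. Combining this with the slope formula yields
\begin{equation*}
0\leq\lim_{s\to\infty}\frac{\mathrm{Ding}_\beta(\phi_s)}{s}=\mathrm{Ding}^{\mathrm{NA}}_\beta((\sX,\sD),\sM)\leq\DF_\beta((\sX,\sD),\sM),
\end{equation*}
and since the configuration was arbitrary this is precisely log K-semistability of $((X,D),-K_X-(1-\beta)D)$ with cone angle $2\pi\beta$.

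The main obstacle is the second step: establishing the slope formula and the inequality $\mathrm{Ding}^{\mathrm{NA}}_\beta\leq\DF_\beta$ in the presence of the boundary $D$ and the attendant conical singularities. This demands the pluripotential theory of the Ding functional for singular metrics, together with a careful matching of the analytic non-Archimedean invariant to the algebraic weight expansions $w(k),\tilde w(k)$ that define $\DF_\beta$ in Definition \ref{K_dfn}. By contrast, the convexity and minimization step is comparatively formal once the conical K\"ahler--Einstein regularity of \cite{CR} is available, and the reduction in the first paragraph is routine.
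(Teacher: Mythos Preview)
The paper does not prove Theorem~\ref{agdg_thm} at all: it is stated as a known input, with the proof deferred entirely to the cited works \cite{Berman, CR, OS}. There is therefore no ``paper's own proof'' to compare against, and your outline is not in competition with anything in the text.

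That said, your sketch is a faithful summary of the Berman variational argument in its log/edge incarnation, and the structure---geodesic ray attached to the test configuration, asymptotic slope of the log Ding functional equal to a non-Archimedean Ding invariant, the inequality $\mathrm{Ding}^{\mathrm{NA}}_\beta\leq\DF_\beta$, and the Berndtsson convexity plus minimization at the K\"ahler--Einstein edge metric---is the expected route. You correctly flag the genuine technical point: the slope formula and the comparison inequality in the conical setting require the log adaptation of Berman's pluripotential machinery, which is exactly what the references \cite{Berman} and \cite{OS} (and subsequent work) supply. At the level of a proof plan there is no gap; a complete proof would of course need to either reproduce or precisely cite those analytic inputs rather than merely name them.
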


\section{Constructing log semi test configurations}\label{tc_section}

In this section, from a pair $(X, D)$,
we construct a specific log semi test configuration via $D$. 
The construction is essentially in the same way as in \cite[\S 3]{fjt}. 
We fix the following condition: 

\begin{assumption}\label{fg_assump}
Let $X$ be an $n$-dimensional normal projective variety which is log terminal, $D$ 
is a nonzero reduced Weil divisor on $X$ 
which is $\Q$-Cartier, and $\beta\in[0, 1]\cap\Q$. 
Assume that the pair $(X, (1-\beta)D)$ is dlt and $-K_X-(1-\beta)D$ is ample.
\end{assumption}

\begin{definition}\label{tau_dfn}
Under Assumption \ref{fg_assump}, we set
\begin{eqnarray*}
\tau(D)&:=&\sup\{\tau\in\R_{>0}\,|\,-K_X-\tau D\text{ big}\},\\
\tau_\beta(D)&:=&\sup\{\tau\in\R_{>0}\,|\,-K_X-(1-\beta+\tau)D\text{ big}\}.
\end{eqnarray*}
It is obvious that $\tau_\beta(D)=\tau(D)-(1-\beta)$. 
We remark that $\tau(D)>1$ holds if and only if the divisor $-K_X-D$ is big. 
\end{definition}

\begin{remark}\label{BCHM_rmk}
By \cite[Corollary 1.1.9]{BCHM}, the $\C$-algebra
\[
\bigoplus_{\substack{k\in\Z_{\geq 0}\\ j\in\Z_{\geq 0}}}
H^0(X, \sO_X(\lfloor k(-K_X-(1-\beta)D)-jD\rfloor))
\]
is finitely generated, where $\lfloor k(-K_X-(1-\beta)D)-jD\rfloor$ is the biggest 
$\Z$-divisor which is contained by $k(-K_X-(1-\beta)D)-jD$. 
We note that 
$H^0(X, \sO_X(\lfloor k(-K_X-(1-\beta)D)-jD\rfloor))=0$ if $j>k\tau_\beta(D)$. 
Thus, there exists $r\in\Z_{>0}$ such that 
\begin{itemize}
\item
$L_\beta:=r(-K_X-(1-\beta)D)$ is Cartier, and 
\item
the $\C$-algebra
\[
\bigoplus_{\substack{k\in\Z_{\geq 0}\\ j\in[0,kr\tau_\beta(D)]\cap\Z}}
H^0(X, \sO_X(kL_\beta-jD))
\]
is generated by 
\[
\bigoplus_{j\in[0,r\tau_\beta(D)]\cap\Z}
H^0(X, \sO_X(L_\beta-jD)).
\]
\end{itemize}
From now on, we fix such $r$ (and $L_\beta$). 
\end{remark}

\begin{thm}[{\cite[Theorem 4.2]{KKL}}]\label{KKL_thm}
Under Assumption \ref{fg_assump}, 
there exist 
\begin{itemize}
\item
a sequence of rational numbers
\[
0=\tau_0<\tau_1<\cdots<\tau_m=\tau_\beta(D),
\]  
\item
normal projective varieties
$X_1,\dots,X_m$ such that $X_1=X$, and 
\item
mutually distinct birational contraction maps
$\phi_i\colon X\dashrightarrow X_i$  
with $\phi_1=\id_X$ $(1\leq i\leq m)$ 
\end{itemize}
such that the following hold: 
\begin{itemize}
\item
for any $x\in[\tau_{i-1}, \tau_i]$, $\phi_i$ is 
a semiample model 
$($see \cite[Definition 2.3]{KKL}$)$ of $-K_X-(1-\beta+x)D$, and 
\item
if $x\in(\tau_{i-1}, \tau_i)$, then $\phi_i$ is 
the ample model 
$($see \cite[Definition 2.3]{KKL}$)$ of $-K_X-(1-\beta+x)D$. 
\end{itemize}
\end{thm}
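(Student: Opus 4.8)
The plan is to deduce the statement from the finite generation recorded in Remark \ref{BCHM_rmk} together with the geography of minimal models, namely \cite[Theorem 4.2]{KKL}, applied to a one-parameter slice of the relevant polytope. The point is that nothing new needs to be proved about the minimal model program: the work is in putting the family $-K_X-(1-\beta+x)D$ into the framework of \cite{KKL} and then reading off the chamber structure along a ray.

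First I would set up the divisorial ring in the required form. Since $-K_X-(1-\beta)D$ is ample and $(X,(1-\beta)D)$ is dlt (and $X$ is log terminal), the divisor $-K_X-(1-\beta+x)D$ is, for every $x\in[0,\tau_\beta(D)]$, the difference of the ample $\Q$-divisor $-K_X-(1-\beta)D$ and the effective $\Q$-divisor $xD$. Passing to the Cartier multiple $L_\beta=r(-K_X-(1-\beta)D)$ fixed in Remark \ref{BCHM_rmk}, the $\C$-algebra
\[
R:=\bigoplus_{\substack{k\in\Z_{\geq 0}\\ j\in\Z_{\geq 0}}}H^0\bigl(X,\sO_X(\lfloor kL_\beta-jD\rfloor)\bigr)
\]
is finitely generated by \cite[Corollary 1.1.9]{BCHM}, and (after the usual rescaling) its support is a rational polyhedral cone, inside which the ray $x\mapsto L_\beta-rxD$ parametrizes exactly the segment $x\in[0,\tau_\beta(D)]$, with $\tau_\beta(D)$ the pseudo-effective threshold of $D$ against $L_\beta/r=-K_X-(1-\beta)D$.

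Next I would invoke the geography of models. By \cite[Theorem 4.2]{KKL}, finite generation of $R$ produces a decomposition of this rational polyhedral cone into finitely many rational polyhedral subcones (Mori chambers) such that to each maximal chamber there is associated a single birational contraction $\phi$ with the property that $\phi$ is a semiample model of every class in the chamber in the sense of \cite[Definition 2.3]{KKL}, and is the ample model of every class in the interior. Restricting this decomposition to the segment $\{\,L_\beta-rxD : x\in[0,\tau_\beta(D)]\,\}$ yields finitely many break points $0=\tau_0<\tau_1<\cdots<\tau_m=\tau_\beta(D)$, which are rational because the chamber walls and the segment are rational, together with birational contractions $\phi_i\colon X\dashrightarrow X_i$ to normal projective varieties $X_i$ such that $\phi_i$ is a semiample model of $-K_X-(1-\beta+x)D$ for $x\in[\tau_{i-1},\tau_i]$ and the ample model for $x\in(\tau_{i-1},\tau_i)$. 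Since ampleness is an open condition and $-K_X-(1-\beta)D$ is ample, $-K_X-(1-\beta+x)D$ is ample for all sufficiently small $x>0$, so on $(\tau_0,\tau_1)$ the ample model is $X$ itself; hence $X_1=X$ and $\phi_1=\id_X$. That the $\phi_i$ are mutually distinct is built into the chamber decomposition of \cite{KKL}: distinct maximal chambers (equivalently, consecutive subintervals) have distinct associated birational contractions, since a chamber is recovered from its contraction.

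The only genuine work, and the step I expect to be the main obstacle, is the translation of conventions: verifying that our dlt setup really fits the adjoint-ring hypotheses of \cite[Theorem 4.2]{KKL}, that the notions ``semiample model'' and ``ample model'' of \cite[Definition 2.3]{KKL} behave on closed versus open chambers as asserted, and that the endpoint of the slice is precisely $\tau_\beta(D)=\tau(D)-(1-\beta)$. This is handled by the ampleness of $-K_X-(1-\beta)D$, which lets one absorb the boundary into the adjoint class and makes $R$ an honest finitely generated divisorial ring of the type considered in \cite{KKL}; once this is in place the remainder is bookkeeping along the ray.
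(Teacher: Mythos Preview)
Your overall strategy---reduce to the geography of models \cite[Theorem~4.2]{KKL} along the ray $x\mapsto -K_X-(1-\beta+x)D$---is exactly what the paper does. However, you have misidentified the one genuine obstacle, and your proposed fix does not address it.

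The hypothesis of \cite[Theorem~4.2]{KKL} that fails under Assumption~\ref{fg_assump} is $\Q$-factoriality of $X$: Assumption~\ref{fg_assump} only asks that $X$ be log terminal and that $D$ be $\Q$-Cartier, not that every Weil divisor be $\Q$-Cartier. Absorbing the boundary into the adjoint class via the ampleness of $-K_X-(1-\beta)D$, as you propose, does nothing for this; the finite generation of $R$ is already granted by Remark~\ref{BCHM_rmk}, so that is not where the difficulty lies. The paper's proof consists precisely of repairing this point: one invokes \cite[Corollary~1.4.3]{BCHM} to take a small $\Q$-factorialization $\sigma\colon\tilde{X}\to X$, observes that the bigraded section ring is unchanged (since $\sigma$ is an isomorphism in codimension one), and checks that semiample and ample models of $-K_{\tilde X}-(1-\beta+x)\tilde D$ are also semiample and ample models of $-K_X-(1-\beta+x)D$. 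Only then does \cite[Theorem~4.2]{KKL} apply. Once you insert this step, the rest of your argument (restricting the chamber decomposition to the segment, identifying $X_1=X$ via ampleness near $x=0$, distinctness of the $\phi_i$) goes through as you describe.
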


\begin{proof}
By \cite[Corollary 1.4.3]{BCHM}, there exists a projective birational morphism 
$\sigma\colon\tilde{X}\to X$ such that $\sigma$ is an isomorphism in codimension 
one and $\tilde{X}$ is $\Q$-factorial. Let $\tilde{D}$ be the strict transform of $D$ 
on $\tilde{X}$. 
A semiample model (resp.\ the ample model) of 
$-K_{\tilde{X}}-(1-\beta+x)\tilde{D}$ is a semiample model (resp.\ the ample model) 
of $-K_X-(1-\beta+x)D$. 
Moreover, the $\C$-algebra
\[
\bigoplus_{\substack{k\in\Z_{\geq 0}\\ j\in\Z_{\geq 0}}}
H^0(X, \sO_X(\lfloor k(-K_X-(1-\beta)D)-jD\rfloor))
\]
is equal to the $\C$-algebra
\[
\bigoplus_{\substack{k\in\Z_{\geq 0}\\ j\in\Z_{\geq 0}}}
H^0(\tilde{X}, \sO_{\tilde{X}}(\lfloor k(-K_{\tilde{X}}-(1-\beta)\tilde{D})-j\tilde{D}\rfloor)).
\]
Thus we can apply \cite[Theorem 4.2]{KKL}. 
\end{proof}

We construct a log semi test configuration of $((X, D), \sO_X(L_\beta))$ from $D$.
For any $j\in[0,r\tau_\beta(D)]\cap\Z$, we set 
\begin{eqnarray*}
I_j:=\Image(H^0\left(X, \sO_X(L_\beta-jD)\right)\otimes_\C \sO_X(-L_\beta)\to\sO_X),
\end{eqnarray*}
where the homomorphism is the evaluation. 
Note that, for any $j\in[0,r\tau_\beta(D)]\cap\Z$, $I_j\subset\sO_X(-jD)$ and
\[
0\subset I_{r\tau_\beta(D)}\subset\cdots\subset I_1\subset I_0=\sO_X
\]
hold. 
For $k\in\Z_{>0}$ and $j\in[0, kr\tau_\beta(D)]\cap \Z$, we define 
\[
J_{(k, j)}:=\sum_{\substack{j_1+\cdots+j_k=j,\\ 
j_1,\dots,j_k\in[0,r\tau_\beta(D)]\cap\Z}}I_{j_1}\cdots I_{j_k}
\subset\sO_X.
\]

\begin{lemma}[{see \cite[Lemma 3.3]{fjt}}]\label{gen_lem}
The $J_{(k, j)}\subset\sO_X$ is equal to 
\begin{eqnarray*}
\Image(H^0\left(X, \sO_X(kL_\beta-jD)\right)\otimes_\C 
\sO_X(-kL_\beta)\to\sO_X).
\end{eqnarray*}
In particular, we have
\[
H^0\left(X, \sO_X(kL_\beta-jD)\right)
=H^0\left(X, \sO_X(kL_\beta)\cdot J_{(k, j)}\right).
\]
\end{lemma}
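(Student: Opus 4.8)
The plan is to prove Lemma \ref{gen_lem} by analyzing the two inclusions of the claimed equality of ideal sheaves, and then deriving the cohomological statement as a consequence. First I would recall that $I_j$ is by construction the image of the evaluation map $H^0(X,\sO_X(L_\beta-jD))\otimes\sO_X(-L_\beta)\to\sO_X$, so a local generator of $I_j$ is of the form $s\cdot(\text{local generator of }\sO_X(-L_\beta))$ for $s\in H^0(X,\sO_X(L_\beta-jD))$, viewed inside $\sO_X$ via the inclusion $\sO_X(-L_\beta)\hookrightarrow\sO_X(-jD)$ (which exists since $L_\beta-jD$ and $jD$ together give $L_\beta$ and sections of $\sO_X(L_\beta-jD)$ can be multiplied by $jD$). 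The product $I_{j_1}\cdots I_{j_k}$ is then generated by products $s_1\cdots s_k\cdot(\text{local generator of }\sO_X(-kL_\beta))$ with $s_i\in H^0(X,\sO_X(L_\beta-j_iD))$; since $s_1\cdots s_k\in H^0(X,\sO_X(kL_\beta-(j_1+\cdots+j_k)D))$, this shows $J_{(k,j)}\subset\Image(H^0(X,\sO_X(kL_\beta-jD))\otimes\sO_X(-kL_\beta)\to\sO_X)$.

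For the reverse inclusion I would invoke the generation property built into the choice of $r$ in Remark \ref{BCHM_rmk}: the $\C$-algebra $\bigoplus_{k,j}H^0(X,\sO_X(kL_\beta-jD))$ is generated in ``degree one'', i.e.\ by $\bigoplus_{j\in[0,r\tau_\beta(D)]\cap\Z}H^0(X,\sO_X(L_\beta-jD))$. Concretely this means every $s\in H^0(X,\sO_X(kL_\beta-jD))$ is a $\C$-linear combination of products $s_1\cdots s_k$ with $s_i\in H^0(X,\sO_X(L_\beta-j_iD))$ and $j_1+\cdots+j_k=j$, $j_i\in[0,r\tau_\beta(D)]\cap\Z$. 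Translating each such product into $\sO_X$ as above places it in $I_{j_1}\cdots I_{j_k}\subset J_{(k,j)}$, and since $J_{(k,j)}$ is a sheaf of ideals (hence an $\sO_X$-module closed under the local linear combinations arising from the global $\C$-linear combination), we get the image of the evaluation map contained in $J_{(k,j)}$. Combining the two inclusions gives the first assertion.

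For the ``in particular'' statement I would argue that for any coherent ideal sheaf $\mathfrak{c}\subset\sO_X$ defined as the image of $H^0(X,\sO_X(N))\otimes\sO_X(-N)\to\sO_X$ for a Cartier divisor $N$ (here $N=kL_\beta-jD$, which is Cartier since $L_\beta$ is Cartier and $jD$ is $\Q$-Cartier — one should check this divisibility, or note $kL_\beta-jD$ is $\Q$-Cartier and its associated divisorial sheaf is what matters), one has a canonical identification $H^0(X,\sO_X(N))=H^0(X,\sO_X(N)\cdot\mathfrak{c}\otimes\sO_X(-N)\otimes\sO_X(N))$; more precisely, tensoring the surjection $H^0(X,\sO_X(N))\otimes\sO_X(-N)\twoheadrightarrow\mathfrak{c}$ by $\sO_X(N)$ and taking global sections, every section of $\sO_X(N)$ lies in the image $H^0(X,\sO_X(N)\cdot\mathfrak{c})$, and conversely the latter is contained in $H^0(X,\sO_X(N))$ because $\mathfrak{c}\subset\sO_X$. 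Applying this with $N=kL_\beta$ and $\mathfrak{c}=J_{(k,j)}$, and using $\sO_X(kL_\beta)\cdot J_{(k,j)}=\sO_X(kL_\beta-jD)$ which follows from the first part together with $J_{(k,j)}\subset\sO_X(-jD)$ and the reverse containment coming from surjectivity of evaluation after twisting, yields the stated equality of spaces of sections.

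The main obstacle I anticipate is bookkeeping around the non-Cartier divisor $D$: one must be careful that the natural maps $\sO_X(-L_\beta)\hookrightarrow\sO_X(-jD)$, the products of divisorial sheaves $\sO_X(-j_1D)\otimes\cdots\otimes\sO_X(-j_kD)\to\sO_X(-jD)$, and the identification $\sO_X(kL_\beta)\cdot J_{(k,j)}=\sO_X(kL_\beta-jD)$ all behave well even where $D$ fails to be Cartier, i.e.\ working with reflexive hulls / away from a codimension-two locus and then extending. This is exactly the kind of argument handled in \cite[Lemma 3.3]{fjt}, so I would follow that reference closely and only flag the points where $\Q$-Cartierness of $D$ (Assumption \ref{fg_assump}) is used to make the torsion-free sheaves in question line up correctly.
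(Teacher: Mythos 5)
Your argument is essentially the paper's own proof: the paper simply notes that, by the choice of $r$ in Remark \ref{BCHM_rmk}, the multiplication map $\bigoplus V_{1,j_1}\otimes\cdots\otimes V_{1,j_k}\to V_{k,j}$ is surjective and that each $I_{j_i}$ is the image of the evaluation map for $\sO_X(L_\beta-j_iD)$, which is exactly your two inclusions; the ``in particular'' clause is left as immediate, and your global-sections argument for it is the intended one. One caveat: in your last paragraph you assert the sheaf-level equality $\sO_X(kL_\beta)\cdot J_{(k,j)}=\sO_X(kL_\beta-jD)$, ``coming from surjectivity of evaluation after twisting''; this is not available, since it would amount to global generation of $\sO_X(kL_\beta-jD)$, which fails in general (already $I_j\subsetneq\sO_X(-jD)$ can occur). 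Fortunately it is also unnecessary: all you need is the containment $\sO_X(kL_\beta)\cdot J_{(k,j)}\subset\sO_X(kL_\beta-jD)$ (from $J_{(k,j)}\subset\sO_X(-jD)$), giving one inclusion of $H^0$'s, plus the tautological fact that each global section $s$ of $\sO_X(kL_\beta-jD)$, being the image of $s\otimes 1$ under evaluation, already lies in $H^0$ of the image subsheaf $\sO_X(kL_\beta)\cdot J_{(k,j)}$ --- both of which are already in your write-up. Your worry about Cartierness of $kL_\beta-jD$ is likewise harmless, since the whole argument is carried out with divisorial sheaves of Weil divisors, as in the paper.
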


\begin{proof}
Set 
\[
V_{k,j}:=H^0(X, \sO_X(kL_\beta-jD))
\]
for simplicity. We remark that, by the choice of $r\in\Z_{>0}$, the homomorphism 
\[
\bigoplus_{\substack{j_1+\cdots+j_k=j,\\ j_1,\dots,j_k\in[0,r\tau_\beta(D)]\cap\Z}}
V_{1, j_1}\otimes_\C\cdots\otimes_\C V_{1, j_k}\to V_{k, j}
\]
is surjective. For any $1\leq i\leq k$, 
the ideal sheaf $I_{j_i}$ is nothing but 
\[
\Image(V_{1, j_i}\otimes_\C\sO_X(-L_\beta)\to \sO_X).
\]
Thus the assertion follows. 
\end{proof}

Set the flag ideal $\sI$ such that
\[
\sI:=I_{r\tau_\beta(D)}+I_{r\tau_\beta(D)-1}t^1+\cdots+ I_1t^{r\tau_\beta(D)-1}+(t^{r\tau_\beta(D)})
\subset\sO_{X\times\A^1_t}.
\]
For any $k\in\Z_{>0}$, we have
\[
\sI^k=J_{(k, kr\tau_\beta(D))}+J_{(k, kr\tau_\beta(D)-1)}t^1+
\cdots+J_{(k, 1)}t^{kr\tau_\beta(D)-1}+(t^{kr\tau_\beta(D)})
\]
by the construction of $J_{(k,j)}$.
Let $\Pi\colon\sX\to X\times\A^1$ be the blowing up along $\sI$ and 
let $E\subset\sX$ be the Cartier divisor given by the equation 
$\sO_\sX(-E)=\sI\cdot\sO_\sX$. Set 
$\sL_\beta:=\Pi^*p_1^*\sO_X(L_\beta)\otimes\sO_\sX(-E)$. 
Let $\sD\to D\times\A^1$ be the blowing up along $\sI|_{D\times\A^1}$. 
We note that $\sI|_{D\times\A^1}=(t^{r\tau_\beta(D)})\subset\sO_{D\times\A^1}$ since 
$I_j\subset\sO_X(-jD)\subset\sO_X(-D)$ for any $j>0$. 
In particular, $\sD\simeq D\times\A^1$ holds.

\begin{lemma}[{see \cite[Lemma 3.4]{fjt}}]\label{stc_lem}
$((\sX, \sD), \sL_\beta)/\A^1$ is a log semi test configuration of 
$((X, D), L_\beta)$. 
\end{lemma}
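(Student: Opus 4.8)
The plan is to verify the two bulleted requirements in Definition \ref{K_dfn}\eqref{K_dfn2}: first that $\sI$ is not of the form $(t^M)$, and second that $\sL_\beta$ is semiample over $\A^1$. The first is immediate: since $D\neq 0$ and $-K_X-(1-\beta)D$ is ample, for small positive $j$ the ideal $I_j$ is nonzero and strictly smaller than $\sO_X$ (it is contained in $\sO_X(-jD)$ but is globally generated by sections of a big and nef, in fact ample, bundle), so $\sI$ genuinely involves the $I_j$'s and is not a power of $(t)$. More precisely, $I_1\neq\sO_X$ because $\sO_X(-D)\subsetneq\sO_X$, and $I_1\neq 0$ because $L_\beta-D=-K_X-(1-\beta)D-D+\dots$ — one checks $H^0(X,\sO_X(L_\beta-D))\neq 0$ directly from the finite generation setup, or simply notes that $r\tau_\beta(D)\geq 1$ by the choice of $r$ so that at least one nontrivial $I_j$ appears.

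The substantive point is semiampleness of $\sL_\beta$ over $\A^1$. Here I would follow the argument of \cite[Lemma 3.4]{fjt} verbatim in structure. The idea is that semiampleness over $\A^1$ can be checked by exhibiting, for some $k\gg 0$, that $\sL_\beta^{\otimes k}$ is globally generated relative to $\A^1$; equivalently that the natural map $\Pi^*\Pi_*(\sL_\beta^{\otimes k})\to\sL_\beta^{\otimes k}$ is surjective, or that the $\Q$-linear system defining the blow-up resolves. Concretely, $\sX$ is the blow-up of $X\times\A^1$ along $\sI$, and $\sO_\sX(-E)=\sI\cdot\sO_\sX$ is by construction relatively ample over $X\times\A^1$; hence $\sL_\beta=\Pi^*p_1^*\sO_X(L_\beta)\otimes\sO_\sX(-E)$ is the pullback of $p_1^*\sO_X(L_\beta)$ twisted by something relatively ample over $X\times\A^1$. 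To get semiampleness over $\A^1$ one uses that $\sO_X(L_\beta)$ is globally generated (enlarging $r$ if necessary) together with the finite-generation statement from Remark \ref{BCHM_rmk}: the key identity, supplied by Lemma \ref{gen_lem}, is that
\[
H^0(X,\sO_X(kL_\beta-jD))=H^0(X,\sO_X(kL_\beta)\cdot J_{(k,j)}),
\]
and that $\sI^k$ decomposes with the $J_{(k,j)}$ as coefficients. This means the graded pieces of $\bigoplus_k\Pi_*(\sL_\beta^{\otimes k})$ are precisely the spaces $\bigoplus_j H^0(X,\sO_X(kL_\beta-jD))t^{kr\tau_\beta(D)-j}$, which by the choice of $r$ are generated in degree one. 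Generation in degree one of the relevant section ring translates into base-point-freeness of $\sL_\beta^{\otimes r}$ (or some fixed multiple) over $\A^1$.

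Therefore the steps, in order, are: (i) record that $\sO_X(L_\beta)$ may be taken globally generated, adjusting $r$; (ii) compute $\Pi_*(\sL_\beta^{\otimes k})$ using $\sO_\sX(-E)=\sI\cdot\sO_\sX$ and the explicit form of $\sI^k$, identifying its graded pieces with $\bigoplus_{j\in[0,kr\tau_\beta(D)]\cap\Z}H^0(X,\sO_X(kL_\beta-jD))t^{kr\tau_\beta(D)-j}$ via Lemma \ref{gen_lem}; (iii) invoke the generation-in-degree-one property from Remark \ref{BCHM_rmk} to conclude that $\Pi^*\Pi_*(\sL_\beta^{\otimes k})\to\sL_\beta^{\otimes k}$ is surjective for suitable $k$, i.e.\ $\sL_\beta$ is semiample over $\A^1$; (iv) confirm $\sI\neq(t^M)$ as above. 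The main obstacle is step (ii)–(iii): one must be careful that the blow-up $\Pi$ does not lose sections and that the monomial-by-monomial decomposition of $\sI^k$ matches the grading on the section ring exactly, so that ``generated in degree one'' on the algebra side genuinely yields relative global generation of a single power of $\sL_\beta$; this is precisely the content that \cite[Lemma 3.4]{fjt} establishes, and the argument there carries over without change since the only modification — replacing $D$ by $(1-\beta)D$ in the anticanonical twist — does not affect any of the ideal-theoretic bookkeeping.
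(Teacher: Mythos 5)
Your proposal follows essentially the same route as the paper, which simply runs the argument of \cite[Lemma 3.4]{fjt}: Lemma \ref{gen_lem} (i.e.\ the degree-one generation built into Remark \ref{BCHM_rmk}) shows each $\sO_X(kL_\beta)\cdot J_{(k,j)}$ is globally generated, hence $p_1^*\sO_X(kL_\beta)\cdot\sI^k$ is generated over $\C[t]$ by its global sections, and this is then transferred to $\sX$. One correction, though: semiampleness over $\A^1$ is the surjectivity of $\alpha^*\alpha_*\sL_\beta^{\otimes k}\to\sL_\beta^{\otimes k}$ for $\alpha:=p_2\circ\Pi\colon\sX\to\A^1$, \emph{not} of $\Pi^*\Pi_*(\sL_\beta^{\otimes k})\to\sL_\beta^{\otimes k}$ as you write twice; the latter map is relative to $X\times\A^1$, is automatically surjective for $k\gg 0$ because $-E$ is $\Pi$-ample, and says nothing about semiampleness over $\A^1$, so the ``equivalently'' in your second paragraph is false as stated (your own computation of the $t$-graded section module is the right one, so this is a fixable conflation of $\Pi$ with $\alpha$). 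Finally, the point you flag as the main obstacle --- that the blow-up ``does not lose sections'' --- is precisely where the paper cites \cite[Lemma 5.4.24]{L} to identify $\alpha_*\sL_\beta^{\otimes k}$ with $H^0(X\times\A^1,p_1^*\sO_X(kL_\beta)\cdot\sI^k)$ for $k\gg 0$; your step (i) (global generation of $\sO_X(L_\beta)$) is not needed anywhere.
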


\begin{proof}
Set $\alpha:=p_2\circ\Pi\colon\sX\to\A^1$. 
It is enough to check that $\sL_\beta$ is $\alpha$-semiample. 
By Lemma \ref{gen_lem}, the homomorphism 
\[
H^0(X, \sO_X(kL_\beta)\cdot J_{(k, j)})\otimes_\C \sO_X\to
\sO_X(kL_\beta)\cdot J_{(k, j)}
\]
is surjective for any $k\in\Z_{>0}$ and $j\in[0, kr\tau_\beta(D)]\cap\Z$. Thus 
\[
H^0\left(X\times\A^1, p_1^*\sO_X(kL_\beta)\cdot\sI^k\right)\otimes_{\C[t]}
\sO_{X\times\A^1}\to p_1^*\sO_X(kL_\beta)\cdot \sI^k
\]
is surjective for any $k\in\Z_{>0}$. 
From \cite[Lemma 5.4.24]{L}, we have 
\begin{eqnarray*}
\alpha^*\alpha_*\sL_\beta^{\otimes k}&\simeq& 
\Pi^*(p_2)^*(p_2)_*(p_1^*\sO_X(kL_\beta)\cdot\sI^k)\\
&=&\Pi^*\left(H^0\left(X\times\A^1, p_1^*\sO_X(kL_\beta)\cdot\sI^k\right)
\otimes_{\C[t]}\sO_{X\times\A^1}\right)\\
&\twoheadrightarrow&\Pi^*\left(p_1^*\sO_X(kL_\beta)\cdot\sI^k\right)\\
&\twoheadrightarrow&
\Pi^*p_1^*\sO_X(kL_\beta)\otimes\sO_\sX(-kE)=\sL_\beta^{\otimes k} 
\end{eqnarray*}
for $k\gg 0$. 
\end{proof}

\begin{definition}\label{divst_dfn}
We say the log semi test configuration $((\sX, \sD), \sL_\beta)/\A^1$ 
the 
\emph{basic log semi test configuration of} $((X, D), \sO_X(L_\beta))$ \emph{via} $D$. 
\end{definition}

Now we calculate the log Donaldson-Futaki invariant of the basic log semi 
test configuration $((\sX, \sD), \sL_\beta)/\A^1$ of 
$((X, D), \sO_X(L_\beta))$ via $D$.
By the asymptotic Riemann-Roch theorem, we have
\begin{eqnarray*}
a_0=\frac{(L_\beta^{\cdot n})}{n!},\quad a_1=\frac{(L_\beta^{\cdot n-1}\cdot -K_X)}
{2\cdot(n-1)!}, \quad \tilde{a}_0=\frac{(L_\beta^{\cdot n-1}\cdot D)}{(n-1)!}.
\end{eqnarray*}
(We follow the notation in Definition \ref{K_dfn}.)
By \cite[\S 3]{odk}, 
\begin{eqnarray*}
w(k)&=&-\dim\left(\frac{H^0(X\times\A^1, p_1^*\sO_X(kL_\beta))}{H^0
(X\times\A^1, p_1^*\sO_X(kL_\beta)\cdot\sI^k)}\right)\\
&=&-kr\tau_\beta (D)\cdot h^0(X, \sO_X(kL_\beta))+v(k),
\end{eqnarray*}
where
\[
v(k):=\sum_{j=1}^{kr\tau_\beta(D)}h^0(X, \sO_X(kL_\beta-jD)).
\]
By the same argument, 
\begin{eqnarray*}
\tilde{w}(k)&=&-\dim\left(\frac{H^0(D\times\A^1, p_1^*\sO_X(kL_\beta)|_D)}{H^0
(D\times\A^1, p_1^*\sO_X(kL_\beta)|_D\cdot(t^{kr\tau_\beta(D)}))}\right)\\
&=&-kr\tau_\beta(D)\cdot h^0(D, \sO_X(kL_\beta)|_D).
\end{eqnarray*}
Thus 
\[
\tilde{b}_0=-r\tau_\beta(D)\frac{(L_\beta^{\cdot n-1}\cdot D)}{(n-1)!}.
\]
We set $v(k)=v_0k^{n+1}+v_1k^n+O(k^{n-1})$. We calculate the values $v_0$ and $v_1$. 
Let $L_{\beta, i}$ and $D_i$ be the divisors on $X_i$ which are 
the push-forwards of $L_\beta$ and $D$, respectively. 
For $k\gg 0$ sufficiently divisible, by \cite[Remark 2.4 (i)]{KKL} and 
\cite[Proposition 4.1]{fjt}, $v(k)$ is equal to

\begin{eqnarray*}
&&\sum_{i=1}^m\sum_{j=kr\tau_{i-1}+1}^{kr\tau_i}h^0\left(X_i, \sO_{X_i}\left(
kL_{\beta, i}-jD_i\right)\right)\\
&=&\sum_{i=1}^m\Bigl(\frac{(kr)^{n+1}}{n!}\int_{\tau_{i-1}}^{\tau_i}
\left(\left((1/r)L_{\beta, i}-xD_i\right)^{\cdot n}\right)dx\\
&&-\frac{(kr)^n}{2\cdot (n-1)!}\int_{\tau_{i-1}}^{\tau_i}
\left(\left((1/r)L_{\beta, i}-xD_i\right)^{\cdot n-1}\cdot(K_{X_i}+D_i)\right)dx\Bigr)\\
&&+O(k^{n-1}).
\end{eqnarray*}

This implies that 
\begin{eqnarray*}
v_0&=&\frac{r^{n+1}}{n!}\sum_{i=1}^m\int_{\tau_{i-1}}^{\tau_i}
\left(\left((1/r)L_{\beta, i}-xD_i\right)^{\cdot n}\right)dx,\\
v_1&=&\frac{-r^n}{2\cdot(n-1)!}\sum_{i=1}^m\int_{\tau_{i-1}}^{\tau_i}
\left(\left((1/r)L_{\beta, i}-xD_i\right)^{\cdot n-1}\cdot(K_{X_i}+D_i)\right)dx.
\end{eqnarray*}

Thus we have
\begin{eqnarray*}
&&\DF_\beta((\sX, \sD), \sL_\beta)\\
&=&2(v_0a_1-v_1a_0)+(1-\beta)(a_0\tilde{b}_0-(v_0-r\tau_\beta(D)a_0)\tilde{a}_0)\\
&=&\frac{n\cdot r^n(L_\beta^{\cdot n})}{(n!)^2}
\sum_{i=1}^m\int_{\tau_{i-1}}^{\tau_i}(\beta-x)\left((-K_{X_i}-(1-\beta+x)
D_i)^{\cdot n-1}\cdot D_i\right)dx.
\end{eqnarray*}

\begin{lemma}[{cf.\ \cite[Theorem 5.2]{fjt}}]\label{vol_lem}
We have 
\[
\eta_\beta(D)=n\sum_{i=1}^m\int_{\tau_{i-1}}^{\tau_i}(\beta-x)\left((-K_{X_i}-(1-\beta+x)
D_i)^{\cdot n-1}\cdot D_i\right)dx.
\]
\end{lemma}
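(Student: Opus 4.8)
The plan is to prove Lemma \ref{vol_lem} by showing that both sides of the claimed identity arise as the integral of the same integrand, after rewriting the definition of $\eta_\beta(D)$ via the substitution $x\mapsto x-(1-\beta)$ and then invoking Theorem \ref{KKL_thm} to compute the volumes birationally. First I would recall that
\[
\eta_\beta(D)=\beta\cdot\vol_X(-K_X-(1-\beta)D)-\int_0^\infty\vol_X(-K_X-(1-\beta+x)D)\,dx,
\]
and observe that $\vol_X(-K_X-(1-\beta+x)D)=0$ for $x>\tau_\beta(D)$, so the outer integral runs only over $[0,\tau_\beta(D)]=\bigcup_{i=1}^m[\tau_{i-1},\tau_i]$. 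For $x\in(\tau_{i-1},\tau_i)$, Theorem \ref{KKL_thm} says $\phi_i\colon X\dashrightarrow X_i$ is the ample model of $-K_X-(1-\beta+x)D$; since $\phi_i$ is a birational contraction and $-K_{X_i}-(1-\beta+x)D_i$ is ample (and $\phi_i$ is $-K_X-(1-\beta+x)D$-nonnegative), one has $\vol_X(-K_X-(1-\beta+x)D)=\left((-K_{X_i}-(1-\beta+x)D_i)^{\cdot n}\right)$. Thus
\[
\int_0^\infty\vol_X(-K_X-(1-\beta+x)D)\,dx=\sum_{i=1}^m\int_{\tau_{i-1}}^{\tau_i}\left((-K_{X_i}-(1-\beta+x)D_i)^{\cdot n}\right)dx.
\]

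Next I would handle the term $\beta\cdot\vol_X(-K_X-(1-\beta)D)$. On $X_1=X$ we have $\vol_X(-K_X-(1-\beta)D)=\left((-K_X-(1-\beta)D)^{\cdot n}\right)$, and I want to express this constant as an integral so it can be merged with the sum above. The natural device is the derivative identity: writing $f_i(x):=\left((-K_{X_i}-(1-\beta+x)D_i)^{\cdot n}\right)$, one has $f_i'(x)=-n\left((-K_{X_i}-(1-\beta+x)D_i)^{\cdot n-1}\cdot D_i\right)$. Since the varieties $X_i$ and the pushforward divisors glue compatibly along the endpoints $\tau_i$ (the ample models agree there, being semiample models for both adjacent ranges — this is exactly the content of Theorem \ref{KKL_thm} and the standard geography-of-models continuity), the function $F(x):=f_i(x)$ for $x\in[\tau_{i-1},\tau_i]$ is continuous on $[0,\tau_\beta(D)]$, with $F(0)=\left((-K_X-(1-\beta)D)^{\cdot n}\right)=\vol_X(-K_X-(1-\beta)D)$ and $F(\tau_\beta(D))=0$. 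Then integration by parts (on each piece, using continuity at the breakpoints to telescope the boundary terms) gives
\begin{eqnarray*}
\eta_\beta(D)&=&\beta\, F(0)-\sum_{i=1}^m\int_{\tau_{i-1}}^{\tau_i}F(x)\,dx\\
&=&-\int_0^{\tau_\beta(D)}F(x)\,dx-\int_0^{\tau_\beta(D)}(\beta-x)F'(x)\,dx+\bigl[(\beta-x)F(x)\bigr]_0^{\tau_\beta(D)}+\beta F(0)-\Bigl(-\int_0^{\tau_\beta(D)}F(x)\,dx\Bigr),
\end{eqnarray*}
which I would organize more cleanly as: $\int_0^{\tau_\beta(D)}(\beta-x)F'(x)\,dx=\bigl[(\beta-x)F(x)\bigr]_0^{\tau_\beta(D)}+\int_0^{\tau_\beta(D)}F(x)\,dx=-\beta F(0)+\int_0^{\tau_\beta(D)}F(x)\,dx$, whence $\eta_\beta(D)=-\int_0^{\tau_\beta(D)}(\beta-x)F'(x)\,dx$. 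Substituting $F'(x)=-n\left((-K_{X_i}-(1-\beta+x)D_i)^{\cdot n-1}\cdot D_i\right)$ on each $[\tau_{i-1},\tau_i]$ yields precisely
\[
\eta_\beta(D)=n\sum_{i=1}^m\int_{\tau_{i-1}}^{\tau_i}(\beta-x)\left((-K_{X_i}-(1-\beta+x)D_i)^{\cdot n-1}\cdot D_i\right)dx,
\]
which is the desired formula.

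The main obstacle I anticipate is justifying, carefully and with the right citations, the two ingredients that make the integration by parts legitimate: first, that $\vol_X(-K_X-(1-\beta+x)D)$ really equals the polynomial $f_i(x)$ on the whole closed interval $[\tau_{i-1},\tau_i]$ (not just the open one) — this needs that $-K_{X_i}-(1-\beta+x)D_i$ is nef for $x$ in the closed interval and that the volume is a birational invariant under the contraction $\phi_i$, which follows from $\phi_i$ being a semiample model in the sense of \cite[Definition 2.3]{KKL} together with \cite[Remark 2.4]{KKL}; and second, the continuity of $F$ across the breakpoints $\tau_i$, i.e.\ that $f_i(\tau_i)=f_{i+1}(\tau_i)$, which again is a consequence of Theorem \ref{KKL_thm} since $\phi_i$ and $\phi_{i+1}$ are both semiample models of $-K_X-(1-\beta+\tau_i)D$ and the volume does not depend on which semiample model one uses. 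Once these are in place the computation is the routine integration-by-parts rearrangement sketched above, and it matches term-by-term the expression for $v_0$, $v_1$ already derived, which is consistent with the displayed formula for $\DF_\beta((\sX,\sD),\sL_\beta)$. I would also double-check the edge cases $\beta=0$ and $\beta=1$ and the possibility $\tau_\beta(D)=0$ (in which case $-K_X-D$ is not big, $m$ is vacuous or the sum is empty, and both sides vanish), though these are degenerate and cause no real difficulty.
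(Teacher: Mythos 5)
Your proposal is correct and follows essentially the same route as the paper: identify $\vol_X(-K_X-(1-\beta+x)D)$ with the intersection number $\left((-K_{X_i}-(1-\beta+x)D_i)^{\cdot n}\right)$ on each closed interval $[\tau_{i-1},\tau_i]$ via \cite[Remark 2.4 (i)]{KKL}, then perform piecewise integration by parts, with the boundary terms telescoping by continuity of the volume and vanishing at $x=\tau_\beta(D)$. The only difference is presentational (you start from $\eta_\beta(D)$ and the paper starts from the right-hand side), so no further changes are needed.
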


\begin{proof}
By \cite[Remark 2.4 (i)]{KKL}, we have
\[
\vol_X(-K_X-(1-\beta+x)D)=((-K_{X_i}-(1-\beta+x)D_i)^{\cdot n})
\]
for any $x\in[\tau_{i-1}, \tau_i]$. From partial integration, we have 
\begin{eqnarray*}
&&n\sum_{i=1}^m\int_{\tau_{i-1}}^{\tau_i}(\beta-x)\left((-K_{X_i}-(1-\beta+x)
D_i)^{\cdot n-1}\cdot D_i\right)dx\\
&=&\sum_{i=1}^m\Bigl(\left[(x-\beta)\vol_X
(-K_X-(1-\beta+x)D)\right]_{\tau_{i-1}}^{\tau_i}\\
&&-\int_{\tau_{i-1}}^{\tau_i}\vol_X(-K_X-(1-\beta+x)D)dx\Bigr)=\eta_\beta(D).
\end{eqnarray*}
We remark that $\vol_X(-K_X-(1-\beta+x)D)=0$ if $x\geq\tau_\beta(D)$. 
\end{proof}

Therefore we have obtained the following. 

\begin{thm}\label{thmthm}
Under Assumption \ref{fg_assump}, assume that $((X, D), -K_X-(1-\beta)D)$ is 
log K-stable $($resp.\ log K-semistable$)$ with cone angle $2\pi\beta$. 
Then $\eta_\beta(D)>0$ $($resp.\ $\geq 0)$ holds. 
\end{thm}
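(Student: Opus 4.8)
The plan is to combine the explicit computations already assembled in Section \ref{tc_section} with the definition of log K-(semi)stability. Concretely, under Assumption \ref{fg_assump} we have at our disposal the basic log semi test configuration $((\sX, \sD), \sL_\beta)/\A^1$ of $((X, D), \sO_X(L_\beta))$ via $D$ (Definition \ref{divst_dfn}), which is a genuine log semi test configuration by Lemma \ref{stc_lem}, and for which the log Donaldson-Futaki invariant has been computed as
\[
\DF_\beta((\sX, \sD), \sL_\beta)=\frac{n\cdot r^n(L_\beta^{\cdot n})}{(n!)^2}\sum_{i=1}^m\int_{\tau_{i-1}}^{\tau_i}(\beta-x)\left((-K_{X_i}-(1-\beta+x)D_i)^{\cdot n-1}\cdot D_i\right)dx.
\]
First I would invoke Lemma \ref{vol_lem} to rewrite the sum of integrals appearing here as $\eta_\beta(D)/n$, so that
\[
\DF_\beta((\sX, \sD), \sL_\beta)=\frac{r^n(L_\beta^{\cdot n})}{(n!)^2}\cdot\eta_\beta(D).
\]

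Next I would note that $L_\beta=r(-K_X-(1-\beta)D)$ is ample by Assumption \ref{fg_assump} together with the choice of $r$ in Remark \ref{BCHM_rmk}, hence $(L_\beta^{\cdot n})>0$, and of course $r^n>0$ and $(n!)^2>0$. Therefore the scalar factor $r^n(L_\beta^{\cdot n})/(n!)^2$ is strictly positive, and $\DF_\beta((\sX, \sD), \sL_\beta)$ has the same sign as $\eta_\beta(D)$. Now suppose $((X, D), -K_X-(1-\beta)D)$ is log K-stable with cone angle $2\pi\beta$. By Definition \ref{K_dfn}\eqref{K_dfn4}, applied with $m$ taken so that $m(-K_X-(1-\beta)D)=L_\beta$ is Cartier (i.e.\ $m=r$, noting the definition of log K-stability for ample $\Q$-divisors is independent of the chosen multiple), log K-stability forces $\DF_\beta((\sX', \sD'), \sM')>0$ for every log semi test configuration; in particular for the basic one, so $\DF_\beta((\sX, \sD), \sL_\beta)>0$, whence $\eta_\beta(D)>0$. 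The log K-semistable case is identical with $>$ replaced by $\geq$.

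There is essentially nothing to prove beyond this bookkeeping: all the analytic and birational-geometric content — the finite generation in Remark \ref{BCHM_rmk}, the chamber decomposition of Theorem \ref{KKL_thm}, the asymptotic Riemann-Roch estimates for $v(k)$, $w(k)$, $\tilde w(k)$, and the identification via partial integration in Lemma \ref{vol_lem} — has already been carried out above. The one point that requires a word of care is the normalization: one must check that the basic log semi test configuration is a log semi test configuration \emph{of $((X, D), L_\beta)$} with $L_\beta=r(-K_X-(1-\beta)D)$, and that the definition of log K-stability of $((X, D), A)$ for the ample $\Q$-divisor $A=-K_X-(1-\beta)D$ is applied with the correct multiple $a=r$; since the definition is independent of $a$, this is harmless, but it is the place where a careless reader might worry. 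I would therefore state explicitly that $\sI$ is not of the form $(t^M)$ (this is part of Definition \ref{K_dfn}\eqref{K_dfn2} and holds because $D\neq 0$ forces $\tau_\beta(D)>0$ and the ideals $I_j$ are nontrivial), so that the basic configuration is a legitimate test object. With these remarks in place the proof is a two-line deduction from Lemma \ref{vol_lem}, the positivity of $(L_\beta^{\cdot n})$, and Definition \ref{K_dfn}\eqref{K_dfn4}.

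The main (and only real) obstacle, if one can call it that, is not in this final step but was already surmounted earlier: establishing that $v(k)$ has the claimed asymptotic expansion required the geography-of-models machinery of \cite{KKL} to write $v(k)$ as a finite sum of volumes over the chambers $[\tau_{i-1},\tau_i]$ and then apply asymptotic Riemann-Roch on each birational model $X_i$ (this is where \cite[Remark 2.4 (i)]{KKL} and \cite[Proposition 4.1]{fjt} enter). Granting all of that, Theorem \ref{thmthm} follows immediately, and this is exactly how I would present it.

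\begin{proof}
By Lemma \ref{vol_lem}, the log Donaldson-Futaki invariant of the basic log semi test configuration $((\sX, \sD), \sL_\beta)/\A^1$ of $((X, D), \sO_X(L_\beta))$ via $D$ satisfies
\[
\DF_\beta((\sX, \sD), \sL_\beta)=\frac{r^n(L_\beta^{\cdot n})}{(n!)^2}\cdot\eta_\beta(D).
\]
Since $L_\beta=r(-K_X-(1-\beta)D)$ is ample, $(L_\beta^{\cdot n})>0$, so the coefficient $r^n(L_\beta^{\cdot n})/(n!)^2$ is a positive rational number; thus $\DF_\beta((\sX, \sD), \sL_\beta)$ and $\eta_\beta(D)$ have the same sign. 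By Lemma \ref{stc_lem}, $((\sX, \sD), \sL_\beta)/\A^1$ is a log semi test configuration of $((X, D), L_\beta)$, and $L_\beta=r(-K_X-(1-\beta)D)$ is Cartier. Hence, if $((X, D), -K_X-(1-\beta)D)$ is log K-stable (resp.\ log K-semistable) with cone angle $2\pi\beta$, then by Definition \ref{K_dfn} \eqref{K_dfn4} we have $\DF_\beta((\sX, \sD), \sL_\beta)>0$ (resp.\ $\geq 0$), and therefore $\eta_\beta(D)>0$ (resp.\ $\geq 0$).
\end{proof}
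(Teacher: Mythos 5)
Your proof is correct and follows exactly the paper's route: the paper deduces Theorem \ref{thmthm} directly from the computation $\DF_\beta((\sX, \sD), \sL_\beta)=\frac{r^n(L_\beta^{\cdot n})}{(n!)^2}\cdot\eta_\beta(D)$ (via Lemma \ref{vol_lem}) together with positivity of the scalar and Definition \ref{K_dfn}. Your extra remarks on the choice of $a=r$ and on $\sI$ not being of the form $(t^M)$ are sound bookkeeping that the paper leaves implicit.
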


\begin{remark}\label{fjt_rmk}
If $\beta=1$, then the value $\eta_1(D)$ is noting but the value $\eta(D)$ in 
\cite[Definition 1.1]{fjt}. 
\end{remark}

\begin{corollary}\label{corcor}
Let $X$ be a normal projective variety which is log terminal, $D$ is a nonzero 
reduced Weil divisor on $X$ which is $\Q$-Cartier. 
Assume that $(X, (1-\beta)D)$ is klt and 
$-K_X-(1-\beta)D$ is ample for any $0<\beta\ll 1$. Moreover, we assume that 
$-K_X-D$ is big. 
Then for any $0<\beta\ll 1$ rational number, $((X, D), -K_X-(1-\beta)D)$ is not 
log K-semistable with cone angle $2\pi\beta$. 
\end{corollary}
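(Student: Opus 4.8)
The plan is to deduce Corollary \ref{corcor} from Theorem \ref{thmthm} by verifying its hypotheses and then showing that the sign condition $\eta_\beta(D)>0$ fails for all sufficiently small rational $\beta>0$. First I would check Assumption \ref{fg_assump}: since $(X,(1-\beta)D)$ is klt it is in particular dlt, $X$ is log terminal, $D$ is a nonzero $\Q$-Cartier reduced Weil divisor, and $-K_X-(1-\beta)D$ is ample by hypothesis, so Theorem \ref{thmthm} applies once $\beta\in(0,1]\cap\Q$ is small. Hence it suffices to prove that $\eta_\beta(D)\le 0$ for $0<\beta\ll 1$; contraposing Theorem \ref{thmthm} then yields that $((X,D),-K_X-(1-\beta)D)$ is not log K-semistable.

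Next I would analyze $\eta_\beta(D)$ as $\beta\to 0^+$. Recall
\[
\eta_\beta(D)=\beta\cdot\vol_X(-K_X-(1-\beta)D)-\int_0^\infty\vol_X(-K_X-(1-\beta+x)D)\,dx .
\]
The first term tends to $0$ as $\beta\to 0$ since $\vol_X(-K_X-(1-\beta)D)$ stays bounded (it is a continuous function of $\beta$ near $0$, by continuity of the volume function along the segment, cf.\ \cite{L}). For the integral, the substitution $y=1-\beta+x$ rewrites it as $\int_{1-\beta}^{\infty}\vol_X(-K_X-yD)\,dy$, and since $-K_X-D$ is big we have $\tau(D)>1$, so $\vol_X(-K_X-yD)>0$ on the interval $(1,\tau(D))$; because the volume is continuous and vanishes only for $y\ge\tau(D)$, the quantity $c:=\int_{1}^{\tau(D)}\vol_X(-K_X-yD)\,dy$ is a strictly positive constant independent of $\beta$. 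For $0<\beta<1$ the integral $\int_{1-\beta}^{\infty}\vol_X(-K_X-yD)\,dy$ is at least $c>0$. Therefore, choosing $\beta$ small enough that $\beta\cdot\vol_X(-K_X-(1-\beta)D)<c$ — possible since the left side $\to 0$ — we obtain $\eta_\beta(D)<0$, hence $\le 0$; for such $\beta$, log K-semistability is impossible.

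The main obstacle I anticipate is the care needed in the two continuity/positivity claims near $\beta=0$: namely that $\vol_X(-K_X-(1-\beta)D)$ is bounded (indeed continuous) as $\beta\to 0^+$, and that $\int_{1-\beta}^{\infty}\vol_X(-K_X-yD)\,dy$ is bounded below by a positive constant uniformly in small $\beta$. Both follow from the fact that $y\mapsto\vol_X(-K_X-yD)$ is a continuous, nonnegative, non-increasing function on $[0,\tau(D))$ that is identically zero on $[\tau(D),\infty)$, together with $\tau(D)>1$ (which is exactly the bigness of $-K_X-D$, as noted in Definition \ref{tau_dfn}); continuity of the volume function is \cite{L}. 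A minor point to record is that the ample $\Q$-divisor version of log K-semistability in Definition \ref{K_dfn}\eqref{K_dfn4} is what appears in the statement, so Theorem \ref{thmthm} — stated for $-K_X-(1-\beta)D$ directly — applies verbatim. Assembling these gives the corollary.
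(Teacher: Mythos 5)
Your argument is correct and is essentially the paper's own proof: both split $\eta_\beta(D)$ into a $\beta$-dependent part that tends to $0$ and the fixed positive quantity $\int_1^\infty\vol_X(-K_X-xD)\,dx$, whose positivity is exactly the bigness of $-K_X-D$ (i.e.\ $\tau(D)>1$), and then invoke Theorem \ref{thmthm}. One small wording slip: proving $\eta_\beta(D)\le 0$ would not suffice to contradict log K-semistability (which only forces $\ge 0$), but since you actually establish the strict inequality $\eta_\beta(D)<0$ for $0<\beta\ll 1$, the conclusion stands.
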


\begin{proof}
We have $\eta_\beta(D)=\eta_+(\beta)-\eta_-$, where 
\begin{eqnarray*}
\eta_+(\beta)&:=&\beta\cdot\vol_X(-K_X-(1-\beta)D)-\int_{1-\beta}^1\vol_X(-K_X-xD)dx,\\
\eta_-&:=&\int_1^\infty\vol_X(-K_X-xD)dx.
\end{eqnarray*}
If $-K_X-D$ is big, then $\eta_-> 0$. On the other hand, 
$\lim_{\beta\to 0}\eta_+(\beta)$ is equal to zero. 
Thus the assertion follows from Theorem \ref{thmthm}. 
\end{proof}

Corollary \ref{intro_cor} is immediately obtained from Theorem \ref{agdg_thm} and 
Corollary \ref{corcor}.

\section{Examples}\label{ex_section}

We see some examples. 

\begin{example}\label{ex1}
Let $X$ be an $n$-dimensional Fano manifold, and let $D$ be a smooth divisor on $X$ 
with $-K_X\sim_\Q lD$ for some $l\in[1, n+1]\cap\Q$. Then $-K_X-(1-\beta)D$ is 
ample for any $\beta\in(0, 1]$. In this case, we have 
\[
\eta_\beta(D)=\frac{n}{n+1}\vol_X(-K_X-(1-\beta)D)\left(\beta-\frac{l-1}{n}\right).
\]
If $\beta<(l-1)/n$, then $\eta_\beta(D)<0$ holds.
\end{example}

\begin{remark}\label{rubinstein_rmk}
In Example \ref{ex1}, if $D\sim -K_X$ (i.e., $l=1$), then $\eta_\beta(D)>0$ for any 
$\beta\in(0, 1]$. Thus our argument does not give any destabilizing information in this 
case. Compare with \cite[\S 6]{CR}.
\end{remark}

\begin{example}\label{ex2}
Let $Y:=\pr_{\pr^1}(\sO_{\pr^1}\oplus\sO_{\pr^1}(1))$, $C$ be a 
section of the $\pr^1$-bundle $Y/\pr^1$ with $(C^{\cdot 2})=1$, 
$\pi\colon X\to Y$ be the blowing up along $p\in Y$ with $p\in C$, $E$ be the 
exceptional divisor of $\pi$, and set $D:=\pi^{-1}_*C$. 
Then $-K_X-(1-\beta)D$ is ample for any $\beta\in(0, 1]$. 
(The pair $(X, D)$ is nothing but \cite[(I8B.1)]{CR0}.) In this case, 
$\tau_1=\beta$, $X_2=Y$, $\tau_2=\tau_\beta(D)=1+\beta$ and 
\[
\vol_X(-K_X-xD)=\begin{cases}
-4x+7 & \text{if }x\in[0,1],\\
x^2-6x+8 & \text{if }x\in[1,2].
\end{cases}
\]
Thus $\eta_\beta(D)=2(\beta^2-2/3)$. 
For example, if we set $\beta:=1/2$, then $\eta_{1/2}(D)<0$ holds. 
In this case ($\beta=1/2$), we can check that $r:=2$ satisfies the condition in 
Remark \ref{BCHM_rmk} and the corresponding flag ideal $\sI$ is of the form 
\[
\sI=\sO_X(-3D-2E)+\sO_X(-2D-E)t^1+\sO_X(-D)t^2+(t^3).
\]
\end{example}


\begin{thebibliography}{99}

\bibitem[BCHM10]{BCHM}
C.\ Birkar, P.\ Cascini, C.\ D.\ Hacon and J.\ M\textsuperscript{c}Kernan, 
\emph{Existence of minimal models for varieties of log general type}, 
J.\ Amer.\ Math.\ Soc.\ \textbf{23} (2010), no.\ 2, 405--468.

\bibitem[Ber12]{Berman}
R.\ Berman, \emph{K-polystability of Q-Fano varieties admitting 
K\"ahler-Einstein metrics}, arXiv:1205.6214; to appear in Invent.\ Math. 

\bibitem[CR13]{CR0}
I.\ A.\ Cheltsov and Y.\ A.\ Rubinstein, 
\emph{Asymptotically log Fano varieties}, arXiv:1308.2503; to appear in Adv.\ Math. 

\bibitem[CR15]{CR}
I.\ A.\ Cheltsov and Y.\ A.\ Rubinstein, 
\emph{On flops and canonical metrics}, arXiv:1508.04634. 

\bibitem[Fuj15]{fjt}
K.\ Fujita, \emph{On K-stability and the volume functions of $\Q$-Fano varieties}, 
arXiv:1508.04052.

\bibitem[KKL12]{KKL}
A.-S.\ Kaloghiros, A.\ K\"uronya and V.\ Lazi\'c, \emph{Finite generation and 
geography of models}, arXiv:1202.1164; to appear in Advanced Studies in Pure Mathematics, Mathematical Society of Japan, Tokyo.


\bibitem[KM98]{KoMo}
J.\ Koll{\'a}r and S.\ Mori, \emph{Birational geometry of algebraic varieties},
Cambridge Tracts in Math, vol.134,
Cambridge University Press, Cambridge, 1998.

\bibitem[Laz04]{L}
R.\ Lazarsfeld, \emph{Positivity in algebraic geometry, I: Classical setting: line bundles 
and linear series}, Ergebnisse der Mathematik und ihrer Grenzgebiete.\ (3) 
\textbf{48}, Springer, Berlin, 2004.

\bibitem[Odk13]{odk}
Y.\ Odaka, \emph{A generalization of the Ross-Thomas slope theory}, 
Osaka.\ J.\ Math.\ \textbf{50} (2013), no.\ 1, 171--185.

\bibitem[OS11]{OS}
Y.\ Odaka and S.\ Sun, \emph{Testing log K-stability by blowing up formalism}, 
arXiv:1112.1353; to appear in Ann.\ Fac.\ Sci.\ Toulouse Math.

\end{thebibliography}
\end{document}